\begin{document}

 \bibliographystyle{plain}
 \newtheorem{theorem}{Theorem}
 \newtheorem{lemma}[theorem]{Lemma}
 \newtheorem{corollary}[theorem]{Corollary}
 \newtheorem{problem}[theorem]{Problem}
 \newtheorem{conjecture}[theorem]{Conjecture}
 \newtheorem{definition}[theorem]{Definition}
 \newtheorem{prop}[theorem]{Proposition}
 \numberwithin{equation}{section}
 \numberwithin{theorem}{section}

 \newcommand{\mo}{~\mathrm{mod}~}
 \newcommand{\mc}{\mathcal}
 \newcommand{\rar}{\rightarrow}
 \newcommand{\Rar}{\Rightarrow}
 \newcommand{\lar}{\leftarrow}
 \newcommand{\lrar}{\leftrightarrow}
 \newcommand{\Lrar}{\Leftrightarrow}
 \newcommand{\zpz}{\mathbb{Z}/p\mathbb{Z}}
 \newcommand{\mbb}{\mathbb}
 \newcommand{\B}{\mc{B}}
 \newcommand{\cc}{\mc{C}}
 \newcommand{\D}{\mc{D}}
 \newcommand{\E}{\mc{E}}
 \newcommand{\F}{\mathbb{F}}
 \newcommand{\G}{\mc{G}}
  \newcommand{\ZG}{\Z (G)}
 \newcommand{\FN}{\F_n}
 \newcommand{\I}{\mc{I}}
 \newcommand{\J}{\mc{J}}
 \newcommand{\M}{\mc{M}}
 \newcommand{\nn}{\mc{N}}
 \newcommand{\qq}{\mc{Q}}
 \newcommand{\PP}{\mc{P}}
 \newcommand{\U}{\mc{U}}
 \newcommand{\X}{\mc{X}}
 \newcommand{\Y}{\mc{Y}}
 \newcommand{\itQ}{\mc{Q}}
 \newcommand{\sgn}{\mathrm{sgn}}
 \newcommand{\C}{\mathbb{C}}
 \newcommand{\R}{\mathbb{R}}
 \newcommand{\T}{\mathbb{T}}
 \newcommand{\N}{\mathbb{N}}
 \newcommand{\Q}{\mathbb{Q}}
 \newcommand{\Z}{\mathbb{Z}}
 \newcommand{\A}{\mathcal{A}}
 \newcommand{\ff}{\mathfrak F}
 \newcommand{\fb}{f_{\beta}}
 \newcommand{\fg}{f_{\gamma}}
 \newcommand{\gb}{g_{\beta}}
 \newcommand{\vphi}{\varphi}
 \newcommand{\whXq}{\widehat{X}_q(0)}
 \newcommand{\Xnn}{g_{n,N}}
 \newcommand{\lf}{\left\lfloor}
 \newcommand{\rf}{\right\rfloor}
 \newcommand{\lQx}{L_Q(x)}
 \newcommand{\lQQ}{\frac{\lQx}{Q}}
 \newcommand{\rQx}{R_Q(x)}
 \newcommand{\rQQ}{\frac{\rQx}{Q}}
 \newcommand{\elQ}{\ell_Q(\alpha )}
 \newcommand{\oa}{\overline{a}}
 \newcommand{\oI}{\overline{I}}
 \newcommand{\dx}{\text{\rm d}x}
 \newcommand{\dy}{\text{\rm d}y}
\newcommand{\cal}[1]{\mathcal{#1}}
\newcommand{\cH}{{\cal H}}
\newcommand{\diam}{\operatorname{diam}}
\newcommand{\bx}{\mathbf{x}}
\newcommand{\Ps}{\varphi}

\parskip=0.5ex

\title[Bounded remainder sets and cut-and-project sets]{Constructing bounded remainder sets and cut-and-project sets which are bounded distance to lattices}
\author{Alan~Haynes and Henna Koivusalo}
\thanks{Research supported by EPSRC grants EP/J00149X/1 and EP/L001462/1.}
\address{Department of Mathematics, University of York, York, UK}
\email{alan.haynes@york.ac.uk, henna.koivusalo@york.ac.uk}

\allowdisplaybreaks

\begin{abstract}
For any irrational cut-and-project setup, we demonstrate a natural infinite family of windows which gives rise to separated nets that are each bounded distance to a lattice. Our proof provides a new construction, using a sufficient condition of Rauzy, of an infinite family of non-trivial bounded remainder sets for any totally irrational toral rotation in any dimension.
\end{abstract}

\maketitle

\section{Introduction}
In this paper we consider the following two problems:
\begin{enumerate}
  \item[(i)] Given a cut-and-project setup corresponding to an $\R^d$ action on $\R^k$, construct a (non-trivial) infinite collection of $(k-d)$-dimensional windows which each give rise to a cut-and-project set that is bounded distance from a lattice.
  \item[(ii)] Given a minimal rotation of $\T^s:=\R^s/\Z^s$, explicitly construct a (non-trivial) infinite collection of regions in $\T^s$, for which the differences between the time and space averages of the system over each of the regions achieve the minimum possible asymptotic bound.
\end{enumerate}
As we will see in what follows, problems (i) and (ii) are closely related. A region which satisfies the condition in problem (ii) is called a {\em bounded remainder set} (BRS) for the corresponding rotation. For $s=1$ the problem is satisfactorily dealt with by works of Hecke \cite{Heck1922}, Ostrowski \cite{Ostr1927/30}, and Kesten \cite{Kest1966/67} (and also related results of Oren \cite{Oren1982}), which together show that for an irrational rotation of $\T$ by $\alpha$, the necessary and sufficient condition for an interval $\mc{I}$ to be a BRS is that $|\mc{I}|\in\alpha\Z+\Z.$ This translates easily into a solution of problem (i) for the special case when $k=d-1$. The first author has recently explored and improved upon this connection in \cite{Hayn2013}, using detailed arguments involving the theory of continued fractions. Several papers \cite{Liar1987,Szus1954,Zhur2005,Zhur2011,Zhur2012} have investigated problem (ii) when $s>1$. Of particular note are the works of Sz\"{u}sz \cite{Szus1954}, who demonstrated a construction of parallelogram BRS's when $s=2$, and Liardet \cite[Theorem 4]{Liar1987}, who used a dynamical cocycles argument to extend Sz\"{u}sz's result to arbitrary $s>1$.

Our goal in this paper is to provide a new constructive solution of both problems above, which works for all choices of dimensions and parameters involved. With respect to problem (ii), the collection of BRS's which we construct is different than the collection which can be constructed using Liardet's above mentioned result.  Our proofs rely on a beautiful result of Rauzy \cite{Rauz1972}, which provides a sufficient condition for a set in the $s$-torus to be a BRS.

To understand the connection between the above problems, first note that taking $d=1$ in problem (i) essentially gives a reformulation of problem (ii) (i.e. with $s=k-1$). Therefore solutions to problem (i) when $d=1$ immediately give solutions to problem (ii), for the corresponding toral rotations. It will become clear in the course of our proofs that this implication also goes the other way, so that solving problem (ii) gives solutions to problem (i). In brief, by using a BRS associated to just one direction in our $d$-dimensional physical space, we obtain enough structure to ensure that we can move the entire cut and project set to a lattice, moving each point by at most a bounded amount.

To state our results more precisely, we first make some definitions. Let $V$ be a $d$-dimensional subspace of $\R^k$, and let $\pi:\R^k\rar\T^k$ be the canonical projection. Suppose that $\mc{S}\subseteq\T^k$ is the image under $\pi$ of a bounded subset of a $(k-d)$-dimensional plane in $\R^k$ which is everywhere transverse to $V$ ($\mc{S}$ is what we will call a {\em section} or a {\em window}), and for each $x\in\R^k$ define $Y=Y_{\mc{S},x}\subseteq V$ by
\[Y_{\mc{S},x}=\{v\in V: \pi (v+x)\in \mc{S}\}.\]
We refer to $Y_{\mc{S},x}$ as the {\em cut-and-project set} associated to $k,V,\mc{S},$ and $x$. In much of what follows we will assume that $V$ is a {\em totally irrational} subspace of $\R^k$ (equivalently, that $\pi (V)$ is dense in $\T^k$). To see that this incurs no loss of generality, notice that every subspace of $\R^k$ has a dense orbit in some `sub-torus' of $\T^k$, so by re-parameterizing when necessary our problems can always be brought into such a situation.

Generically, a cut-and-project set as above is an aperiodic separated net (i.e. Delone set) in $V$. Several authors \cite{BuraKlei1998,BuraKlei2002,Hayn2013,HaynKellWeis2013,McMu1998,Solo2011} have recently addressed the question of determining how far away such sets can be from lattices in $V$. One way of measuring this is to say that two sets in $\R^k$ are {\em bounded distance} (BD) to one another if there is a bijection between them which moves each point by at most some fixed constant amount. This agrees exactly with the definition in other papers of {\em bounded displacement equivalence}. It was proved in \cite{HaynKellWeis2013} that for any $k>d\ge 2,$ for almost every $d$-dimensional subspace $V$ of $\R^k$ (in the sense of the natural measure on the Grassmannian manifold), and for every section $\mc{S}$ which is a $(k-d)$-dimensional aligned box (a box with all sides parallel to coordinate planes in $\R^k$), the set $Y$ is BD to a lattice in $V$. On the other hand it was also shown in the same paper that for almost every section $\mc{S}$ which is a parallelotope (or ball, ellipsoid, or suitably nice shape), there is a residual set of subspaces $V$ for which the corresponding sets $Y$ are not BD to any lattice. This shows that the problem of determining which cut-and-project sets are BD to a lattice is not trivial. It is also worth pointing out that the above mentioned results all rely on the Diophantine approximation properties of the subspace $V$. For subspaces $V$ which are extremely well approximable by rational subspaces, the above results give no information about cut-and-project sets coming from $V$. With this as a backdrop, we present our first result.
\begin{theorem}\label{thm.main}
For any $d,k,$ and $V$ as above, there is an infinite collection of $(k-d)$-dimensional sections with the property that, for any section $\mc{S}$ from the collection, and for any $x\in\R^k,$ the set $Y_{\mc{S},x}$ is BD to a lattice.
\end{theorem}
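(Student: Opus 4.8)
The plan is to exploit the connection between problems (i) and (ii) that the introduction advertises, and build the windows directly from bounded remainder sets for a single one-dimensional rotation. Concretely, fix a totally irrational $d$-dimensional subspace $V\subseteq\R^k$. Choosing coordinates so that $V$ is a graph over the first $d$ coordinates, one obtains a natural $\R^d$-action on $\T^k$, and restricting to one generator yields a minimal rotation of a sub-torus $\T^s$ with $s=k-d$ (after the re-parameterisation remarked upon in the text, the rotation is totally irrational). The idea is then to take the window $\mc{S}$ to be (the $\pi$-image of) a set of the form $B\times\mc{R}$, where $\mc{R}\subseteq\T^s$ is a bounded remainder set for this rotation and $B$ is a fixed fundamental domain in the remaining $d-1$ complementary directions; the infinite family will come from an infinite family of BRS's $\mc{R}$.

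First I would set up the discretisation: for $x\in\R^k$, intersecting $V$ with the strip over a fundamental domain in the $d-1$ "free" physical directions reduces the counting of points of $Y_{\mc{S},x}$ in a large box to counting visits of the orbit of the one-dimensional rotation to $\mc{R}$. The defining property of a bounded remainder set is exactly that this count equals (measure of $\mc{R}$)$\,\cdot\,$(number of steps) plus an $O(1)$ error, uniformly. The key step is to promote this uniform discrepancy bound into an actual bounded-distance bijection with a lattice: one slices $V$ into unit-width "slabs" transverse to the chosen direction, notes that the restriction of $Y_{\mc{S},x}$ to each slab is, by the BRS property, within a bounded number of points of a rescaled copy of $\Z$ in the chosen direction (with the correct density $1/\operatorname{vol}(\mc{S})$), and then assembles a bijection slab-by-slab, using a Hall's-theorem / bounded-discrepancy matching argument to absorb the $O(1)$ defect in each slab. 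Transversality of the window plane to $V$ guarantees the slabs are genuinely $(k-1)$-dimensional cross-sections and that the induced point configuration in each slab is itself a separated net, so the matching constants can be taken uniform in the slab and in $x$.

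The remaining ingredient is the \emph{existence} of an infinite, genuinely non-trivial family of such $\mc{R}$. Here I would invoke Rauzy's sufficient condition for a set in $\T^s$ to be a BRS (cited in the excerpt): one exhibits an explicit infinite family of regions — built from the return-time / induced-transformation geometry of the rotation, or as unions of appropriately scaled boxes whose total measure lies in the relevant module generated by the coordinates of the rotation vector — and checks Rauzy's hypothesis for each. Because Rauzy's condition is a clean measure-theoretic/dynamical criterion rather than a continued-fraction computation, this step works in every dimension and for every totally irrational rotation, which is precisely the uniformity the theorem claims; one also checks directly that these $\mc{R}$ are not the degenerate full-measure or measure-zero cases, so the resulting windows $\mc{S}$ are non-trivial and pairwise genuinely different.

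I expect the main obstacle to be the passage from the scalar discrepancy estimate to the bounded-distance bijection in the full $d$-dimensional physical space: one must be careful that the $O(1)$ errors accumulated across infinitely many slabs do not conspire to force unbounded displacement, which is why the matching must be done locally within each slab (of bounded width) and why one needs the net property — a uniform lower bound on gaps — to turn "off by a bounded count" into "move each point by a bounded distance". Verifying that Rauzy's hypothesis genuinely holds for an \emph{infinite} family, rather than just one cleverly chosen set, and confirming the family is non-trivial, is the other place where some care is required, but it is essentially a matter of making the right explicit choice of regions adapted to the rotation.
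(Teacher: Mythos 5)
Your overall strategy is the paper's: use one physical direction to define a totally irrational rotation of the internal torus $\T^s$, $s=k-d$, take the window to be a bounded remainder set for that rotation obtained via Rauzy's criterion, and convert the uniform $O(1)$ discrepancy into a bounded-distance bijection column by column. However, your construction of the window is not valid as stated. A section here must be a bounded subset of a $(k-d)$-dimensional plane transverse to $V$; your $B\times\mc{R}$, with $\mc{R}\subseteq\T^{k-d}$ and $B$ a fundamental domain in the remaining $d-1$ physical directions, is $(k-1)$-dimensional, and for $d\ge 2$ the condition imposed by the factor $B$ is vacuous, so $Y_{\mc{S},x}$ would contain $(d-1)$-dimensional pieces rather than being a separated net. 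The fix (and what the paper does) is to take the window to be the BRS $A$ itself, embedded in the plane spanned by $e_{d+1},\dots,e_k$: the requirement that $\pi(v+x)$ lie on that plane already discretizes all $d$ physical directions, and for each fixed $(n_2,\dots,n_d)$ the admissible $n_1$ are exactly the visit times of an orbit to a translate $A-\gamma(n_2,\dots,n_d)$, which is controlled because the BRS definition is uniform in the starting point. With that correction your column argument goes through, and no Hall's-theorem matching is needed: the bound $|\ell_i(n_2,\dots,n_d)-i|A|^{-1}|\le C|A|^{-1}$ on the $i$-th visit time directly defines the bijection onto the lattice $|A|^{-1}\Z\times\Z^{d-1}$ (up to a linear change of variables back to $V$).

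The more serious gap is the existence step, which you essentially assert. Producing an infinite family of non-trivial BRS's for an arbitrary totally irrational rotation of $\T^s$, $s\ge 2$, and verifying Rauzy's hypotheses for them, is the main content of the paper, not a routine check; and your suggested candidates (unions of scaled aligned boxes whose total measure lies in the module generated by $1,\alpha_1,\dots,\alpha_s$) cannot work in general, since by Liardet's theorem the only aligned boxes that are BRS's are the trivial ones, and membership of the measure in that module is far from sufficient. What is actually needed is a class of regions for which one can exhibit a lattice $M$ and a vector $\beta$ such that $A$ injects into $\T^s$ modulo $M$ and the first-return map is translation by $\beta$ modulo $M$. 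The paper achieves this with its \emph{special regions}: projections $\Ps(P(v_1,\dots,v_s))$ of parallelotopes spanned by part of a $\Z$-basis $v_1,\dots,v_{s+1}$ of the lattice $\Lambda=\Z e_1+\dots+\Z e_s+\Z(\alpha,1)\subseteq\R^{s+1}$, subject to conditions (S2) and (S4), which force the return map to be translation by $\Ps(v_{s+1})$ modulo the lattice generated by $\Ps(v_1),\dots,\Ps(v_s)$; an explicit basis-exchange procedure (a higher-dimensional analogue of the continued fraction algorithm) then generates infinitely many such regions. Without a concrete construction of this kind and a verification of Rauzy's conditions, your argument has no actual windows to feed into the BD step, so the theorem is not yet proved.
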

In our proof of this theorem we will construct an infinite family of sections (which necessarily depends on $V$) which satisfy the conclusion. Our aim is in fact to prove that these sections are examples of bounded remainder sets, of which Theorem \ref{thm.main} is a corollary. There is a related construction, due to Duneau and Oguey \cite[Theorem 3.1]{DuneOgue90}, which can be used to give an alternative proof of this theorem, however it does not illustrate the connection to the bounded remainder set problem.

Let us now formally define bounded remainder sets. For a Lebesgue measure preserving transformation $T:\T^s\rar\T^s$, we will say that a measurable set $A\subseteq\T^s$ is a {\it bounded remainder set} for $T$ if
\[
\sup_{x\in\R^s}\sup_{N\in\N}\left|\sum_{n=0}^{N-1}\chi_A(T^n(x)) - N|A|\right|<\infty,
\]
where $\chi_A$ is the indicator function of $A$ and $|A|$ denotes its measure. We are interested primarily in the case when $T$ is a rotation, given by $T(x)=x+\alpha,$ for some $\alpha\in\R^s$. Therefore, we may refer to a BRS for $T$ as a BRS for $\alpha$, or simply a BRS, if the context is clear. As in our discussion above, there is little loss of generality in assuming that $\alpha$ is a totally irrational rotation (i.e. that $\{n\alpha\}_{n\in\N}$ is dense in $\T^s$).

To fit the definition of bounded remainder sets into a larger framework, we digress for a moment. It is important in many contexts to quantify how evenly distributed a sequence of numbers is, modulo $1$. One way of doing this is to define, for $N\in\N$, the {\em discrepancy} $D_N$ of a sequence $\{x_n\}_{n=1}^\infty\subseteq\T^s$ by
\[D_N(\{x_n\})=\sup_{B\in\mc{B}}\left|\sum_{n=1}^N\chi_B(x_n)-N|B|\right|,\]
where the supremum is taken over all sets in some suitable family $\mc{B}$ of subsets of $\T^s$. It is common to take $\mc{B}$ to be the collection of all aligned boxes, which then leads to many important applications, for example the Koksma-Hlawka Inequality in numerical integration \cite[Theorem 1.14]{DrmoTich1997}. For aligned boxes (and some other classes of special shapes) a useful estimate for $D_N$ can be obtained by using the Erd\H{o}s-Tur\'{a}n-Koksma Inequality \cite[Theorem 1.21]{DrmoTich1997}. Bounding the discrepancy in this way involves estimating certain exponential sums. For the case when $x_n=n\alpha$ these sums grow large when $\alpha$ is well approximable by rational numbers, which is a limitation in many problems. Unfortunately the discrepancy estimates which are obtained in this way are not far from the truth. Results of van Aardenne-Ehrenfest \cite{vanA1945,vanA1949}, and later of Roth \cite{Roth1954} and W.~M.~Schmidt \cite{Schm1972} imply that, for any sequence, the discrepancy can never remain bounded as $N\rar\infty$. This highlights the special place in this theory occupied by bounded remainder sets.

In addition to the aforementioned work, bounded remainder sets have been studied by a number of authors. W. M. Schmidt \cite{Schm1974} showed that, for any sequence $\{x_n\}_{n=0}^\infty\subseteq\T^s$, there are at most a countably infinite number of possible volumes $|A|$ for measurable sets $A\subseteq\T^s$ satisfying
\[\sup_{N\in\N}\left|\sum_{n=0}^{N-1}\chi_A(x_n) - N|A|\right|<\infty.\]
For the special case when $x_n=x+n\alpha$, for some $x\in\R^s$ and totally irrational $\alpha\in\R^s,$  Liardet \cite{Liar1987} proved that the only examples of aligned boxes $A$ which are BRS's are the trivial ones which are derived from the $s=1$ problem. In other words, they consist only of products of intervals of the form
\[A=\prod_{r=1}^s \mc{I}_r,\]
for which there exists an $r'$ such that $|\mc{I}_{r'}|\in\alpha_{r'}\Z+\Z$, and $|\mc{I}_r|=1$ for all $r\not=r'$. These examples constitute what we refer to as the `trivial' solutions to problems (i) and (ii) above.

Non-trivial examples of BRS's for $s=2$ were given in \cite{Chev2009,Szus1954}, and for $s\ge 2$ in \cite{Liar1987,Zhur2005,Zhur2011,Zhur2012} (see also \cite{GrabHellLiar2012} for more discussion of what is known). As indicated above, combining Sz\"{u}sz and Liaret's results (\cite{Szus1954} and \cite[Theorem 4]{Liar1987}), one can obtain a nice algorithm for constructing examples of parallelotope BRS's in any dimension.

In this paper we are going to provide a simple construction, using a sufficient condition due to Rauzy \cite{Rauz1972}, which produces infinitely many examples of BRS's, for any $s$ and for any irrational rotation of $\T^s$. The BRS's which we construct in this way are what we will call {\em special regions}, and they are obtained by projections to the torus of parallelotopes coming from a lattice in $\R^{s+1}$ defined by the rotation.
\begin{theorem}\label{thm.mainBRS}
For any totally irrational rotation $\alpha$ of $\T^s$, every special region for $\alpha$ is a BRS.
\end{theorem}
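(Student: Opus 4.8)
The plan is to realize every special region as a set to which Rauzy's sufficient condition for being a bounded remainder set applies; Theorem~\ref{thm.mainBRS} is then immediate, and the substance of the proof is the verification of that condition. I would first set up the dynamical picture behind the construction. Adjoin a coordinate $x_0$ and put $\omega=(1,\alpha_1,\dots,\alpha_s)\in\R^{s+1}$; the rotation $R_\alpha\colon x\mapsto x+\alpha$ of $\T^s$ is the first-return map, with constant return time $1$, of the straight-line flow on $\T^{s+1}=\R^{s+1}/\Z^{s+1}$ in direction $\omega$ to the transversal $\{x_0\in\Z\}/\Z^{s+1}\cong\T^s$, and the translation by $\alpha$ on $\T^s$ is induced by translation by $\omega$ on $\R^{s+1}$ modulo $\Z^{s+1}$. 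A special region $A$ arises, by its construction, from a parallelotope $P\subseteq\R^{s+1}$ spanned by the direction $\omega$ together with a $\Z$-basis of the transversal lattice $\{0\}\times\Z^s$, and the first job is to convert this description into precisely the shape Rauzy's criterion requires.

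Slicing $P$ by the transversal and projecting back along $\omega$ yields a bounded parallelotope $S\subseteq\R^s$; I would check that the basis chosen makes $S$ a fundamental domain for $\Z^s$ and that $A$ is the image in $\T^s$ of the piece of $S$ that is pushed by a prescribed lattice vector when $S$ is translated by $\alpha$. Granting this, the bounded transfer function demanded by Rauzy's condition is produced directly: for $x\in\R^s$ let $n(x)\in\Z^s$ be the unique vector with $x-n(x)\in S$, and let $g(\pi x)$ be a fixed coordinate of this representative $x-n(x)$. Then $g$ is well defined on $\T^s$ and bounded, because $S$ is bounded; the increment $n(x+\alpha)-n(x)$ depends only on the class of $x$ and records which $\Z^s$-translate of $S$ absorbs $S+\alpha$; and a short telescoping computation gives $\chi_A-|A|=g-g\circ R_\alpha$ pointwise on $\T^s$, together with $|A|$ equal to the corresponding coordinate of $\alpha$. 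Summing along the orbit,
\[
\sum_{n=0}^{N-1}\bigl(\chi_A(R_\alpha^n x)-|A|\bigr)=g(x)-g(R_\alpha^N x),
\]
which is bounded by $2\|g\|_\infty$ uniformly in $x$ and $N$: this is exactly the definition of a BRS.

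The step I expect to be the real obstacle is the verification that $S$ has the two properties used above: that it is genuinely a fundamental domain for $\Z^s$, and that translating it by $\alpha$ shifts the relevant piece by exactly the prescribed lattice vector and nothing worse, so that the increment function behaves as Rauzy's condition requires (in particular takes only the values $0$ and $1$ in the distinguished coordinate). This amounts to a geometric statement about how the $2(s+1)$ facets of $P$ are placed relative to the direction $\omega$ and to $\Z^{s+1}$: the facets must pair off consistently under the $\Z^{s+1}$-action, and the resulting system of facet translations must be coherent, all of this uniformly in the dimension $s$, which is where the bookkeeping becomes delicate. The total irrationality of $\alpha$ enters precisely at this point, guaranteeing that $\omega$ avoids every rational hyperplane so that the relevant intersections are transverse and the facet matching is unambiguous, while the unimodularity built into the chosen basis is what makes $S$ a fundamental domain. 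Once this geometric verification is complete, the remaining points---measurability, the exact value of $|A|$, and the telescoping estimate---are routine.
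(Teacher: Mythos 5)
Your overall strategy---write $\chi_A-|A|=g-g\circ R_\alpha$ with $g$ bounded and telescope---is a legitimate sufficient condition for a BRS, but it is not the criterion of Rauzy used in the paper (which concerns the first-return map of $T$ to $A$ being a translation modulo a lattice $M$), and, more importantly, your reduction misdescribes what a special region is. The parallelotope defining $A$ is $P(v_1,\dots,v_s)$, where $v_1,\dots,v_s$ are vectors of the lattice $\Lambda=\Z e_1+\cdots+\Z e_s+\Z(\alpha,1)$ which, together with a further vector $v_{s+1}\in\Lambda$ subject to (S2) and (S4), form a $\Z$-basis of $\Lambda$; these $v_i$ in general have nonzero last coordinate (already after one step of the construction in Section \ref{sec.DescrSpecReg} one of them is $e_j-(\alpha,1)$). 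They are \emph{not} ``the direction $\omega$ together with a $\Z$-basis of the transversal lattice $\{0\}\times\Z^s$.'' Under your description the slice $S$ would be a fundamental domain for $\Z^s$ and $A$ would be all of $\T^s$, i.e., you only capture the trivial special region; a genuine special region is a fundamental domain for the smaller lattice $M=\Z\Ps(v_1)+\cdots+\Z\Ps(v_s)$, its measure is in general neither $1$ nor a coordinate of $\alpha$, and your proposed $g$ (a coordinate of the $\Z^s$-reduced representative of $x$) is built over the wrong lattice, so it cannot produce $\chi_A-|A|$ for a nontrivial special region.

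Beyond this, the step you yourself flag as ``the real obstacle''---that the increment $n(x+\alpha)-n(x)$ is such that the telescoped sum yields exactly $\chi_A-|A|$, i.e., that the distinguished coordinate of the increment takes only the values $0$ and $1$ and equals $1$ precisely on $A$---is the entire content of the theorem, and no argument is offered for it; transversality coming from total irrationality and unimodularity of the basis are not sufficient on their own, since those facts hold equally for parallelotopes that are not BRS's. Conditions (S2) and (S4), which your outline never invokes, are what make this work. In the paper's proof one shows that the induced map satisfies $S(x)=x+\Ps(v_{s+1})\bmod M$ by decomposing $\Lambda$ into the cosets $H_k=H_0+kv_{s+1}$ (with $H_0$ the span of $v_1,\dots,v_s$), observing that each $H_k$ meets the cylinder $\Ps^{-1}(A)$ in exactly one lattice point $x_k$, using (S2) to write $x_{k+1}=x_k+v_{s+1}-\sum_{i\in I_k}v_i$ for some $I_k\subseteq\{1,\dots,s\}$, and using (S4) to conclude that the last coordinates (the return times) increase with $k$, so that consecutive $x_k$ are consecutive returns; Rauzy's conditions (R1)--(R2) then apply, with a short density argument passing from the orbit of $0$ to all of $A$. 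Your proposal would need an argument of exactly this kind, carried out over $M$ rather than $\Z^s$, to close the gap; as written, it reduces the problem to its essential difficulty, sets that difficulty up over the wrong lattice, and leaves it unresolved.
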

For comparison with Sz\"{u}sz and Liardet's BRS's, we note that the parallelotopes obtained from Liardet's algorithm always have a face parallel to one of the coordinate hyperplanes in $\R^s$. Our special regions, on the other hand, typically do not have this property. It would be nice to understand the exact intersection of the two collections of regions constructed by our different algorithms, but this seems to be a technically difficult problem.

We will define special regions in Section \ref{sec.DescrSpecReg}, and show using Rauzy's criteria that they are BRS's. Then we will explain a method for explicitly constructing an infinite collection of special regions, for any irrational rotation. In Section \ref{sec.SpecRegToSpecSec} we will conclude by completing the proof of Theorem \ref{thm.main}.

{\em Acknowledgments:} We would like to thank John Hunton for inspiring us to think about projections of higher dimensional lattices, and Robert Tichy and Barak Weiss for pointing out several valuable references. We would also like to mention that Michael Kelly and Lorenzo Sadun have informed us that they have obtained results similar to our Theorem \ref{thm.main}, which should be forthcoming. Finally, we would like to thank the referee for his or her suggestions, which helped us to make several significant improvements in the paper.

\section{Special regions and Rauzy's criteria}\label{sec.DescrSpecReg}
Suppose that $\alpha\in\mathbb T^s$ is totally irrational and let $T:\T^s\rar\T^s$ be defined by $T(x)=x+\alpha$. Let $\alpha':=(\alpha,1)\in\R^{s+1}$, and define $\Lambda$ to be the lattice in $\R^{s+1}$ generated by $\alpha'$ together with the first $s$ standard basis vectors, $e_1,\ldots ,e_s$. Note that $\Lambda$ consists precisely of all vectors of the form
\[(n\alpha_1+a_1, \cdots, n\alpha_s+a_s,n),\]
where $n,a_1,\ldots ,a_s\in\Z$. For $1\le i_1<\cdots <i_j\le s+1$, we define $\Ps_{i_1\dots i_j}$ to be the projection map from $\R^{s+1}$ to the $(e_{i_1},\dots ,e_{i_j})$-plane in $\mathbb R^{s+1}$, and for simplicity of notation we set $\Ps:=\Ps_{1\dots s}$ and identify $\R^s$ with $\Ps(\R^{s+1})$. Finally, for any linearly independent vectors $u_1,\dots, u_m$ in $\mathbb R^{s+1}$, we denote by $P(u_1,\dots, u_m)$ the parallelotope which the vectors generate, in the linear subspace they span. We assume that the boundary of the parallelotope is chosen so that $P(u_1,\dots ,u_m)$ is a fundamental domain for the lattice generated by $u_1,\dots, u_m$, in the subspace which they span.

We say that a set $A$ in $\R^s$ is a {\em special region} for $\alpha$ if there exist vectors $v_1,\dots,v_{s+1}\in\Lambda$ satisfying the following conditions:
\begin{enumerate}[(S1)]
\item $A=\Ps(P(v_1, \dots, v_{s})),$
\item\label{cond.proj} $\Ps(v_{s+1})\in A$,
\item $v_1,\dots,v_{s+1}$ form a $\Z$-basis for $\Lambda$, and
\item\label{cond.up} For any subset $I\subset\{1,\dots, s\}$ we have that
\[\Ps_{s+1}(v_{s+1}) - \sum_{i\in I}\Ps_{s+1}(v_i)>0.\]
\end{enumerate}
We will explain how to construct such regions at the end of this section. First we focus on the proof of Theorem \ref{thm.mainBRS}.

Suppose that $T$ is a totally irrational rotation as above and that $A\subseteq\T^s$ is any set. Let $S:A\rar A$ be the map induced by $T$ on $A$ (i.e. the first return map to $A$). In \cite{Rauz1972} it is shown that for $A$ to be a BRS for $T$, it is sufficient that there exists a lattice $M\subseteq\R^s$ and a point $\beta\in\R^s$, such that
\begin{enumerate}[(R1)]
\item\label{cond.simple} If $a,b\in A$ satisfy $a=b\mo M$, then $a=b$, and
\item\label{cond.behaveModM} For all $x\in A$, we have that $S(x)= x+\beta\mo M$.
\end{enumerate}
Now we will show that every special region for $\alpha$ satisfies these conditions.

\begin{proof}[Proof of Theorem \ref{thm.mainBRS}]
Suppose that $A$ satisfies conditions (S1)-(S4), define $M$ to be the lattice in $\R^s$ generated by $\Ps(v_1), \dots, \Ps(v_{s})$, and let $\beta = \Ps(v_{s+1})$.

It is obvious from the definitions that condition (R1) is satisfied. In order to check (R2) we begin with some observations. First notice that the (forward and backward) orbit of $0$ under $S$ is encoded in the points of $\Lambda$ which lie in the cylinder $\Ps^{-1}(A)$. The last coordinate of each such point encodes its return time, with respect to $T$. In other words,
\[
\Lambda\cap\Ps^{-1}(A)= \left\{(S^n(0), \ell_n) : n\in\mathbb Z,~S^n(0)=T^{\ell_n}(0)\right\}.
\]
Next, denote by $H_0$ the hyperplane in $\R^{s+1}$ spanned by $v_1,\dots,v_s$, and for each $k\in\Z$ define $H_k:=H_0+kv_{s+1}.$ It follows from (S3) that the lattice $\Lambda$ can be written as the disjoint union
\[\Lambda=\bigcup_{k=-\infty}^\infty(\Lambda\cap H_k).\]
For each $k$, the set $(H_k\cap\Ps^{-1}(A))-kv_{s+1}$ is a fundamental domain for $H_0/(H_0\cap\Lambda)$. Therefore each set $(\Lambda\cap H_k)\cap\Ps^{-1}(A)$ contains exactly one point.



\begin{figure}\label{fig.windmills}
\centering
\def\svgwidth{0.65\columnwidth}
\caption{The red dots show the orbit of $0$ as it moves up through the hyperplanes $H_k$. When the point leaves the cylinder above $A$, it moves to a neighboring copy of $A$ in $H_k$, and must be translated back by an appropriate element of $M$.}
\vspace*{.2in}
\includegraphics[width=0.65\textwidth]{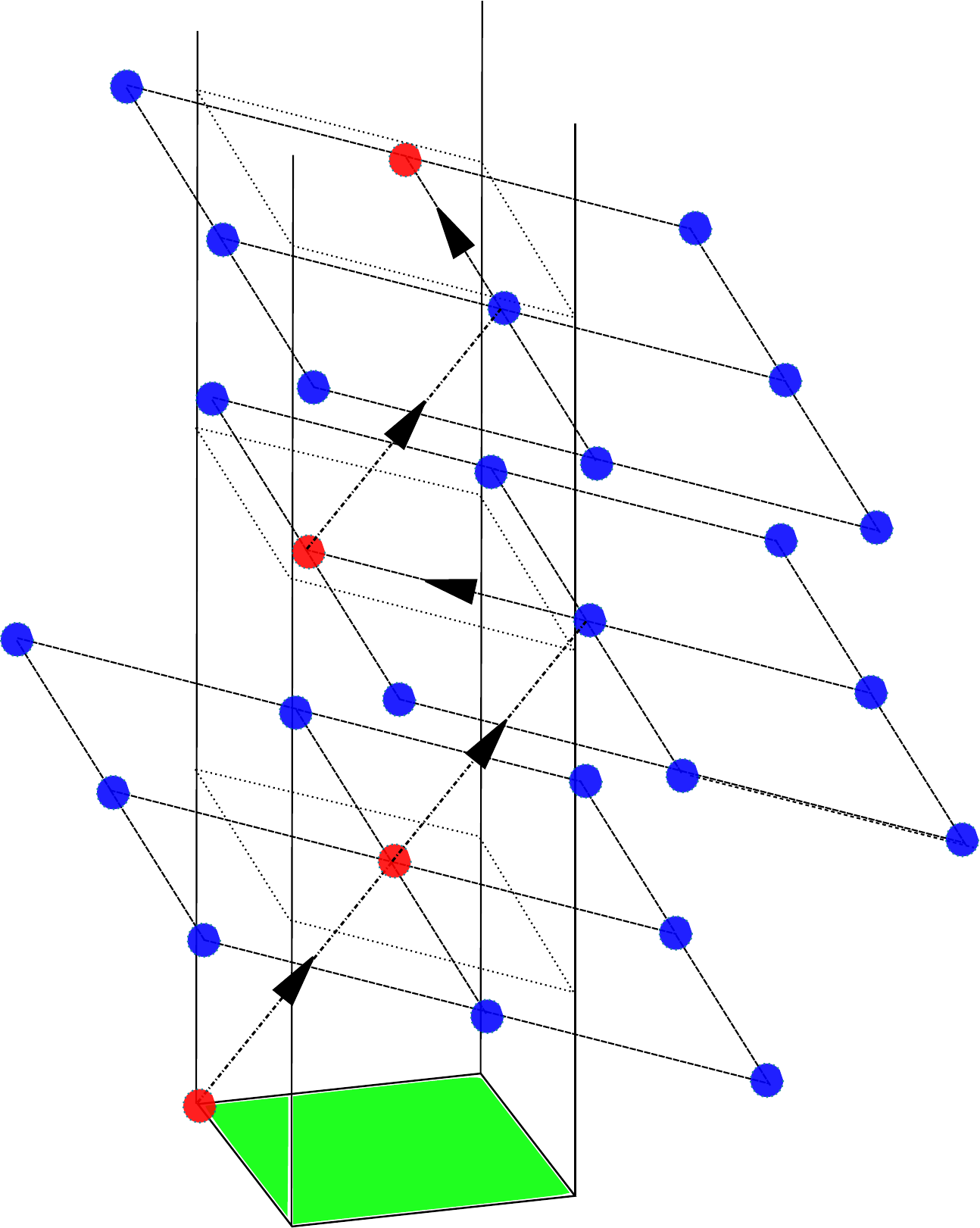}
\vspace*{.1in}
\end{figure}

Now for each $k$ let $x_k$ be the unique element of $(\Lambda\cap H_k)\cap\Ps^{-1}(A).$ If $x_k+v_{s+1}$ lies in $\Ps^{-1}(A)$, then it must be $x_{k+1}.$ In any case (see Figure 1), it follows from (S2) that there is a subset $I_k\subseteq\{1,\ldots ,s\}$ with the property that
\[\Ps(x_k)+\Ps(v_{s+1})-\sum_{i\in I_k}\Ps(v_i)\in A,\]
and we therefore have that
\begin{equation}\label{eqn.x^(k)->x^(k+1)}
x_{k+1}=x_k+v_{s+1}-\sum_{i\in I_k}v_i.
\end{equation}
Then by (S4) we have that
\[\Ps_{s+1}(x_{k+1})>\Ps_{s+1}(x_k),\]
and this implies that, for all $k$,
\[S(\Ps(x_k))=\Ps(x_{k+1})=\Ps(x_k)+\beta\mo M.\]
This verifies condition (R2) for all $x\in A\cap(\alpha\Z+\Z^s)$. To finish the proof simply observe that, for any $x\in A,$ we can find a sequence $\{(n_i,a^{(i)})\}_{i=1}^\infty\subseteq \Z\times\Z^s$ such that
\[n_i\alpha-a^{(i)}\rar x\quad\text{and}\quad S(n_i\alpha-a^{(i)})\rar S(x),~\text{as}~i\rar\infty.\]
From this it follows that $S(x)=x+\beta\mo M$.
\end{proof}
Now we explain a method for constructing infinitely many special regions. Denote the positive cone generated by a collection of points $x_1,\ldots ,x_m\in\R^s$ by
\[C^+(x_1,\ldots ,x_m):=\left\{t_1x_1+\cdots +t_mx_m:t_1,\ldots,t_m\ge 0\right\}.\]
Suppose that $\alpha\in\R^s$ is totally irrational, and without loss of generality suppose that $\alpha$ is chosen so that it lies in the cube $[0,1)^s$. Begin with the obvious basis for $\Lambda,$ obtained by taking $v_i=e_i$ for $1\le i\le s$, and $v_{s+1}=\alpha'$. The region $A=\Ps (P(v_1,\ldots v_s))$ is all of $\T^s$, and it is clearly a special region satisfying (S1)-(S4) above.

Now consider what happens when we replace one of the vectors $v_j$, for some $1\le j\le s$, with the vector $v_j':=v_j-v_{s+1}$. The new collection of vectors will still be a fundamental domain for $\Lambda$, and we claim that
\begin{equation}\label{eqn.PosConeCondition}
\Ps(v_{s+1})\in C^+(\Ps(v_1),\ldots ,\Ps(v_j'),\ldots ,\Ps(v_s)).
\end{equation}
To verify this, note that by (S2) we can write
\[\Ps(v_{s+1})=\sum_{i=1}^st_i\Ps(v_i),\]
with $0<t_i<1$ for each $i$. Then we have that
\[\Ps(v_{s+1})=\left(\frac{t_j}{1-t_j}\right)\Ps(v_j')+\sum_{\substack{i=1\\i\not=j}}^s\left(t_i+\frac{t_jt_i}{1-t_j}\right)\Ps(v_i),\]
and since all of the coefficients are positive, we have established (\ref{eqn.PosConeCondition}).

It follows that we can choose {\em non-negative} integers $b_1,\ldots ,b_s$ with the property that the vector
\[v_{s+1}':=v_{s+1}-b_jv_j'-\sum_{\substack{i=1\\i\not=j}}^sb_iv_i\]
satisfies
\[\Ps(v_{s+1}')\in A':=\Ps(P(v_1,\ldots ,v_j',\ldots ,v_s)).\]
In order to verify (S4) for our new region, notice that
\[\Ps_{s+1}(v_1),\ldots ,\Ps_{s+1}(v_s)\le 0\quad \text{ while }\quad \Ps_{s+1}(v_{s+1})>0,\]
which implies that
\[\Ps_{s+1}(v_{s+1}')\ge \Ps_{s+1}(v_{s+1})\quad\text{ and }\quad\Ps_{s+1}(v_j')< \Ps_{s+1}(v_j).\]
It therefore follows that $A'$ together with $v_1,\ldots ,v_j',\ldots v_s,v_{s+1}'$ satisfy conditions (S1)-(S4) above. By iteratively relabelling and repeating this argument, we can construct as many new examples of non-trivial special regions as we wish.

For comparison, the reader may wish to note that, in the case $s=1$ the procedure we have described here is exactly analogous to the simple continued fraction algorithm.

\section{Proof of Theorem \ref{thm.main}}\label{sec.SpecRegToSpecSec}
As in the introduction, by re-parameterizing in a sub-torus if necessary, we may assume throughout this section that $V$ is a totally irrational $d$-dimensional subspace of $\R^k$. We parameterize $V$ by choosing real numbers $\alpha_i^{(j)}$, for $1\le i\le d$ and $1\le j\le k-d$, such that
\[
V=\left\{\left(y_1,\dots, y_d, \sum_{i=1}^d\alpha_i^{(1)}y_i, \dots, \sum_{i=1}^d\alpha_i^{(k-d)}y_i\right):y_1,\ldots ,y_d\in\R\right\}.
\]
Let $s=k-d$ and set $\alpha= (\alpha_1^{(1)},\dots,\alpha_{1}^{(s)})\in\R^s$. Since $V$ is a totally irrational subspace of $\R^k$, it follows that $\alpha$ is totally irrational in $\R^s$.

Let $A$ be any special region for the irrational rotation of $\R^s$ by $\alpha$, and let $\mc{S}$ be the canonical embedding of $A$ into the subspace generated by $e_{d+1},\ldots ,e_{k}$ in $\R^k$. We will refer to any section $\mc{S}$ constructed in this way as a {\em special section} for $V$, and we may identify $\mc{S}$ with $A$ when there is no ambiguity in doing so.
\begin{proof}[Proof of Theorem \ref{thm.main}]
We wish to show that, for any special section $\mc{S}$, and for any $x\in\R^k$, the set $Y_{\mc{S},x}$ is BD to a lattice. An equivalent problem is to show that, for any set $\mc{S}$ which is a translate of a special section by an element of the subspace generated by $e_{d+1},\ldots ,e_{k}$, the set $Y_{\mc{S},0}$ is BD to a lattice. Therefore suppose that $\mc{S}$ is a section of the latter form.

Using the notation described above, observe that
\begin{align*}
Y_{\mc{S},0}=\bigg\{\Big(n_1,\dots, n_d, &\sum_{i=1}^d\alpha_i^{(1)}n_i, \dots, \sum_{i=1}^d\alpha_i^{(k-d)}n_i\Big):n_1,\ldots ,n_d\in\Z,\\
&n_1\alpha\in A-\gamma(n_2,\ldots ,n_d)\bmod\Z^s\bigg\},
\end{align*}
where $A$ is a special region for $\alpha,$ and $\gamma(n_2,\ldots ,n_d)\in\R^s$. There is a linear map from $Y$ to the set $Y'\subseteq\R^s$ defined by
\begin{align*}
Y'=\left\{(n_1,\dots, n_d)\in\Z^s: n_1\alpha\in A-\gamma(n_2,\ldots ,n_d)\bmod\Z^s\right\},
\end{align*}
therefore it is sufficient for us to show that $Y'$ is BD to a lattice. We remark that this part of the argument necessarily introduces an additional rescaling constant, which depends only on $V$, in the final BD map for $Y$ itself.

For each $(d-1)-$tuple of integers $(n_2,\ldots ,n_d)$ write
\[\left\{n_1\in\Z:n_1\alpha\in A-\gamma(n_2,\ldots ,n_d)\bmod\Z^s\right\}=\{\ell_i(n_2,\ldots ,n_d)\}_{i\in\Z},\]
with $\ell_i<\ell_{i+1}$ and $\ell_{-1}<0\le \ell_0$. Consider the map from $Y'$ to
\[|A|^{-1}\Z\times\Z^{d-1}\]
defined by
\[(\ell_i(n_2,\ldots ,n_d),n_2,\ldots ,n_d)\mapsto \left(i|A|^{-1},n_2,\ldots ,n_d\right).\]

By Theorem \ref{thm.mainBRS}, there exists a constant $C$, which only depends on $A$, such that for any $n_2,\ldots ,n_d$ and for any $i\in\N,$
\[
\left|\ell_i(n_2,\ldots ,n_{d})-i|A|^{-1}\right|\le C|A|^{-1}.
\]
It is easy to see from the definition of a BRS (e.g. see the comment at the end of the proof of \cite[Theorem 3.6]{Hayn2013}) that this inequality also holds for all $i\le 0$, and this proves that the map defined above is a BD map from $Y'$ to a lattice.
\end{proof}

\end{document}